\providecommand{\U}[1]{\protect\rule{.1in}{.1in}}
\providecommand{\U}[1]{\protect\rule{.1in}{.1in}}
\numberwithin{equation}{section}
\newtheorem{thm}{Theorem}[section]
\newtheorem{prop}{Proposition}[section]
\theoremstyle{definition}
\begin{document}
\title[Hessian estimate]{Hessian estimate for semiconvex solutions to the sigma-2 equation}
\author{Ravi Shankar}
\author{Yu YUAN}
\address{University of Washington\\
Department of Mathematics, Box 354350\\
Seattle, WA 98195}
\email{shankarr@uw.edu, yuan@math.washington.edu}
\thanks{RS and YY are partially supported by
NSF Graduate Research Fellowship Program under grant No. DGE-1762114 and NSF grant DMS-1800495 respectively.}

\begin{abstract}
We derive a priori interior Hessian estimates for semiconvex solutions to the
sigma-2 equation. An elusive Jacobi inequality, a transformation rule under
the Legendre-Lewy transform, and a mean value inequality for the still
nonuniformly elliptic equation without area structure are the key to our
arguments. Previously, this result was known for almost convex solutions.

\end{abstract}
\date{\today}
\subjclass{35J96, 35B45}
\maketitle

\section{Introduction}

\label{sec:Intro}

In this paper, we prove a priori Hessian estimates for semiconvex solutions to
the quadratic Hessian equation
\begin{equation}
F\left(  D^{2}u\right)  =\sigma_{2}\left(  \lambda\right)  =\sum_{1\leq
i<j\leq n}\lambda_{i}\lambda_{j}=\frac{1}{2}\left[  \left(  \bigtriangleup
u\right)  ^{2}-\left\vert D^{2}u\right\vert ^{2}\right]  =1. \label{Esigma2}%
\end{equation}
Here $\lambda_{i}s$ are the eigenvalues of the Hessian $D^{2}u.$

\begin{thm}
\label{thm:Hess} Let $u$ be a smooth semiconvex solution to $\sigma_{2}\left(
D^{2}u\right)  =1$ on $B_{R}\left(  0\right)  \subset\mathbb{R}^{n}$ with
$D^{2}u\geq-K~I$ for any fixed $K>0.$ Then
\[
\left\vert D^{2}u\left(  0\right)  \right\vert \leq C\left(  n,K\right)
\exp\left[  C\left(  n,K\right)  \left\Vert Du\right\Vert _{L^{\infty}\left(
B_{R}\left(  0\right)  \right)  }^{2}/R^{2}\right]  .
\]

\end{thm}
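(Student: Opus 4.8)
The plan is to realize the three ingredients advertised in the abstract: a Jacobi-type subsolution inequality for a curvature-like quantity, a Legendre--Lewy transform that converts semiconvexity into usable pinching of the Hessian spectrum, and a mean value inequality for the linearized operator that avoids any minimal-surface/area structure. Write $\mathcal{L}=\sigma_2^{ij}\partial_{ij}$ for the linearized operator, with $\sigma_2^{ij}=\partial\sigma_2/\partial u_{ij}=\Delta u\,\delta_{ij}-u_{ij}$; since $\sum_i\partial_i\sigma_2^{ij}=0$, $\mathcal{L}$ is in divergence form, and $\mathrm{tr}(\sigma_2^{ij})=(n-1)\Delta u$ has $L^1$-norm on a ball controlled by $\|Du\|_{L^\infty}$ through the divergence theorem. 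For the first step, differentiating $\sigma_2(D^2u)=1$ once and twice and applying $\mathcal{L}$ to $b=\log\sqrt{1+\lambda_{\max}^2}$ --- at a point where the top eigenvalue is simple, or via a smoothed variant --- I would extract an inequality of the form $\mathcal{L}b\geq c(n)\,\sigma_2^{ij}b_ib_j-C(n,K)$ on $\{\lambda_{\max}\geq1\}$, i.e.\ that $b$ is a subsolution up to a bounded term. The content --- and the reason this one is elusive --- is that the third-order terms $\sigma_2^{ij}u_{ikl}u_{jkl}$ produced by the differentiations must be shown to absorb the sign-indefinite terms, which requires using the equation $\sigma_2=1$ together with the lower bound $D^2u\geq-KI$ (for almost convex solutions there is enough slack that this much was already known).

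The subsolution of the first step is only useful once the geometry is quantitatively controlled on a fixed ball, which semiconvexity alone does not supply. So the second step is the Legendre--Lewy transform: a rotation of the Lagrangian graph $\{(x,Du(x))\}$ by a fixed angle depending on $K$, composed where necessary with passage to the Legendre conjugate, sending $u$ to a new potential $\bar u$ on a comparable domain. Under the induced M\"obius action on eigenvalues, $\lambda_i\geq-K$ becomes a two-sided bound $\bar\lambda_i\in(\bar m(K),\bar M(K))$ with $\bar M(K)<\infty$ the image of $+\infty$, so $\bar u$ solves a quadratic-Hessian-type equation with pinched spectrum --- and a Hessian bound for $\bar u$ away from $\bar M(K)$ is exactly equivalent to the asserted bound on $D^2u(0)$. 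I would record the precise transformation rules for $D^2u$, for $\|D\bar u\|_{L^\infty}$, and for $b$, so the output of the last step transfers back, with the exponential dependence on $\|Du\|_{L^\infty}^2/R^2$ emerging from the rescaling together with the iteration constants.

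For the third step I would prove a mean value inequality: a nonnegative subsolution $w$ of $\mathcal{L}w\geq-C$ with $\int_{B_1}\mathrm{tr}(\sigma_2^{ij})\leq\Lambda$ satisfies $\sup_{B_{1/2}}w\leq C(n,\Lambda)\big(\int_{B_1}w+1\big)$. In the special Lagrangian case the corresponding statement comes from the Michael--Simon Sobolev inequality on the minimal gradient graph; here $\sigma_2^{ij}$ carries no such conformal/area structure, so I would run a Moser-type iteration directly, exploiting that in the $n-1$ directions orthogonal to the top eigendirection $\sigma_2^{ij}$ is uniformly elliptic with coefficient of order $\lambda_{\max}$, and handling the single degenerate direction through the divergence form of $\mathcal{L}$ together with the $L^1$ bound on its trace (an ABP/maximum-principle argument is an alternative route). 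Applying this to $w=(b-t_0)^+$ for a threshold $t_0=t_0(n,K)$ and combining with the first step bounds $\lambda_{\max}(\bar u)$ at the relevant point by $C\exp\big(C\|D\bar u\|_{L^\infty}^2\big)$; undoing the transform of the second step then gives the theorem.

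I expect the main obstacle to be the third step, together with arranging the first two so that it applies: producing a genuine mean value inequality for an operator that is uniformly elliptic in only $n-1$ of the $n$ directions, with no area structure available and with constants depending only on $n$ and $K$. The Jacobi inequality is delicate but is ultimately a finite computation; the real novelty is replacing the minimal-surface toolkit by a bare-hands degenerate-elliptic iteration.
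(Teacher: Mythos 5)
Your outline names the right three ingredients, and your first step is essentially the paper's Jacobi inequality (the paper proves $\bigtriangleup_{F}b\geq\frac{1}{4}\left\vert \nabla_{F}b\right\vert ^{2}$ for $b=\ln\lambda_{\max}$ once $\lambda_{\max}\geq\Lambda(n,K)$, using semiconvexity to show all eigenvalues but the top one are bounded by $C(n,K)$). But the proposal has a genuine gap at the step you yourself flag as the main obstacle: the mean value inequality. You propose to prove it directly for the degenerate operator $\sigma_{2}^{ij}\partial_{ij}$ by a ``bare-hands'' Moser iteration using uniform ellipticity in $n-1$ directions plus an $L^{1}$ bound on the trace. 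There is no known way to do this, and the paper's Remark 2.2 explains why the direct route falls short: applying the local maximum principle in the original variables with the weight $\Vert DF\Vert^{n}/\det DF\leq C(n)\lambda_{1}^{2}$ only yields a mean value inequality with the \emph{quadratic} weight $(\Delta u)^{2}$, which is too weak to close the argument (it gives an $L^{3}$-Hessian bound, not an $L^{\infty}$ bound in terms of $\Vert Du\Vert_{L^{\infty}}^{2}$). The linear-weight inequality $b(0)\leq C(n,K)\int_{B_{1}}b\,\Delta u\,dx$ is what is actually needed.

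The missing idea is the transformation rule for the linearized operator (Proposition \ref{prop:trans} of the paper): because $u$ solves $F(D^{2}u)=0$, the Legendre--Lewy change of variables $y=Du(x)+Kx$ carries $F_{ij}\partial_{x_{i}x_{j}}\varphi$ \emph{exactly} to $G_{ij}\partial_{y_{i}y_{j}}\varphi^{\ast}$ with no first-order remainder, where $G$ is the (conformally uniformly elliptic, by Chang--Yuan) equation for the transformed potential $w(y)$. Hence the Jacobi subsolution $b$ in $x$-variables is a viscosity subsolution of a uniformly elliptic operator in $y$-variables, the standard Caffarelli--Cabr\'e local maximum principle applies there, and pulling back produces the Jacobian factor $\det(D^{2}u+KI)\leq C(n,K)\Delta u$ --- i.e.\ precisely the linear weight. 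Your step 2 treats the transform only as a device to pin the spectrum of $\bar{u}$ and proposes to redo the whole estimate for $\bar{u}$; it does not supply the equivariance of the linearized operators, which is what makes the mean value inequality provable at all. Separately, your closing step is underdeveloped: after the mean value inequality one must bound $\int_{B_{1}}b\,\Delta u\,dx$ by $\Vert Du\Vert_{L^{\infty}}^{2}$, which in the paper requires integrating by parts via the divergence form of $(F_{ij})$, the pointwise bound $|Db|\leq C(n)F_{ij}b_{i}b_{j}+C(n,K)\Delta u$ (using the quantitative ellipticity bounds on $f_{i}$), and then the \emph{integral} Jacobi inequality to control $\int F_{ij}b_{i}b_{j}$ by $\int F_{ij}\Phi_{i}\Phi_{j}\leq C(n,K)\Vert Du\Vert_{L^{\infty}}$; your sketch does not carry this chain out.
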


Given the gradient bound in terms of $K$-convex function $u\left(  x\right)  $
(note that Trudinger's gradient estimates for $\sigma_{k}$ equations need no
semiconvexity of the solutions [T]), we can control $D^{2}u$ in terms of the
solution $u$ in $B_{2R}\left(  0\right)  $ as
\[
\left\vert D^{2}u\left(  0\right)  \right\vert \leq C\left(  n,K\right)
\exp\left[  C\left(  n,K\right)  \left\Vert u\right\Vert _{L^{\infty}\left(
B_{2R}\left(  0\right)  \right)  }^{2}/R^{4}\right]  .
\]
One quick application of the above estimates is a rigidity result for entire
semiconvex solutions with quadratic growth to (\ref{Esigma2}): every such
solution must be quadratic.

Recall any solution to the Laplace equation $\sigma_{1}\left(  D^{2}u\right)
=\bigtriangleup u=1$ enjoys a priori Hessian estimates; yet there are singular
solutions to the three dimensional Monge-Amp\`{e}re equation $\sigma
_{3}\left(  D^{2}u\right)  =\det D^{2}u=1$ by Pogorelov [P], which
automatically generalize to singular solutions to $\sigma_{k}\left(
D^{2}u\right)  =1$ with $k\geq3$ in higher dimensions $n\geq4.$

Sixty years ago, Heinze [H] achieved a Hessian bound for solutions to equation
$\sigma_{2}\left(  D^{2}u\right)  =1$ in dimension two by two dimensional
techniques. More than ten years ago, a Hessian bound for $\sigma_{2}\left(
D^{2}u\right)  =1$ in dimension three was obtained via the minimal surface
feature of the \textquotedblleft gradient\textquotedblright\ graph $\left(
x,Du\left(  x\right)  \right)  $ in the joint work with Warren [WY]. Along
this \textquotedblleft integral\textquotedblright\ way, Qiu [Q] has proved
Hessian estimates for solutions to the three dimensional quadratic Hessian
equation with $C^{1,1}$ variable right hand side. Hessian estimates for convex
solutions to general quadratic Hessian equations have also been obtained via a
new pointwise approach by Guan and Qiu [GQ]. Hessian estimates for almost
convex solutions to (\ref{Esigma2}) have been derived by a compactness
argument in [MSY]. Hessian estimates for solutions to Monge-Amp\`{e}re
equation $\sigma_{n}\left(  D^{2}u\right)  =\det D^{2}u=1$ and Hessian
equations $\sigma_{k}\left(  D^{2}u\right)  =1$ ($k\geq2$) in terms of the
reciprocal of the difference between solutions and their boundary values, were
derived by Pogorelov [P] and Chou-Wang [CW], respectively, using Pogorelov's
pointwise technique. Lastly, we also mention Hessian estimates for solutions
to $\sigma_{k}$ as well as $\sigma_{k}/\sigma_{n}$ equations in terms of
certain integrals of the Hessian by Urbas [U1,U2], Bao-Chen-Guan-Ji [BCGJ].

Note that the almost convexity condition for solutions in [GQ, (15)] and [MSY,
Theorem 1.1] is essential in both the respective arguments toward Hessian
estimates for quadratic Hessian equations. The mean value inequality in
\cite{WY} used the area structure of the equation. For semiconvex solutions,
an elusive Jacobi inequality, a transformation rule under Legendre-Lewy
transform, and a mean value inequality corresponding to the still nonuniformly
elliptic linearized operator without area structure are essential in our proof
of Theorem 1.1.

The bulk of Section \ref{sec:Prelims} is devoted to establishing the Jacobi
inequality, Proposition \ref{prop:Jac}, $\sum F_{ij}b_{ij}\geq\sum F_{ij}%
b_{i}b_{j}$ with $F_{ij}$ the linearized operator, $b=\frac{1}{4}\ln
\lambda_{\max}\left(  D^{2}u\right)  ,$ and $u\left(  x\right)  $ the
semiconvex solution. The difficult nature of the fully nonlinear equation
(\ref{Esigma2}) is that its linearized operator matrix $\left(  F_{ij}\right)
$ is not uniformly elliptic; see (\ref{F_ij}) and (\ref{IEDfrange}). What
saves us is that the PDE for the Legendre-Lewy transform of $u(x)$ is
uniformly elliptic, found in [CY]. By the transformation rule Proposition
\ref{prop:trans}, the subharmonic $b$ in original variables corresponds to a
subharmonic $b$ in new variables for the linearized operator of the new,
uniformly elliptic equation. In new variables, the local maximum principle
implies a mean value inequality for the subharmonic $b,$ which upon pulling
back to original variables yields the mean value inequality in Proposition
\ref{prop:mvi}. The Hessian estimate becomes possible in Section
\ref{sec:Hess}. The Jacobi inequality combined with the divergence structure
of $F_{ij}$ allows us to bound the integral in terms of $\Vert Du\Vert
_{L^{\infty}}.$

\medskip The Hessian estimates for general solutions $K=\infty$ to quadratic
Hessian equation $\sigma_{2}\left(  D^{2}u\right)  =1$ in higher dimension
$n\geq4$ still remain an issue to us.

\section{Preliminaries}

\label{sec:Prelims}

Taking the gradient of both sides of the quadratic Hessian equation
(\ref{Esigma2}), we have%
\begin{equation}
\bigtriangleup_{F}Du=0, \label{Egradient}%
\end{equation}
where the linearized operator is given by
\begin{equation}
\bigtriangleup_{F}=\sum_{i,j=1}^{n}F_{ij}\partial_{ij}=\sum_{i,j=1}%
^{n}\partial_{i}\left(  F_{ij}\partial_{j}\right)  , \ \ \label{lin}%
\end{equation}
with%
\begin{equation}
\left(  F_{ij}\right)  =\bigtriangleup u\ I-D^{2}u=\sqrt{2+\left\vert
D^{2}u\right\vert ^{2}}\ I-D^{2}u>0. \label{F_ij}%
\end{equation}
Here without loss of generality, we assume $\bigtriangleup u>0.$ Otherwise the
smooth Hessian $D^{2}u$ would be in the $\bigtriangleup u<0$ branch of the
equation \eqref{Esigma2}. Given the semiconvexity condition, the conclusion in
Theorem \ref{thm:Hess} would be trivially true.

In passing, we add a quick proof of the quantitative ellipticity for equation
(\ref{Esigma2}) (again on the positive branch):%
\begin{align}
\frac{2}{\left(  n+1\right)  \lambda_{1}}  &  \leq F_{\lambda_{1}}\leq\left(
n-1\right)  \lambda_{1},\label{QEllip}\\
\left(  \sqrt{2}-1\right)  \lambda_{1}  &  \leq F_{\lambda_{i}}\leq\left(
n-1\right)  \lambda_{1}\ \ \text{for }i\geq2,\nonumber
\end{align}
which was first proved by Lin-Trudinger \cite{LT}, under the convention
$\lambda_{1}\geq\lambda_{2}\geq\cdots\geq\lambda_{n}.$ The upper bound is
straightforward. For the lower bound,%
\[
D_{1}\sigma_{2}=\sqrt{\left\vert \lambda\right\vert ^{2}+2}-\lambda_{1}%
=\frac{\left\vert \lambda^{\prime}\right\vert ^{2}+2}{\sqrt{\left\vert
\lambda\right\vert ^{2}+2}+\lambda_{1}}\geq\frac{2}{\sigma_{1}+\lambda_{1}%
}\geq\frac{2}{\left(  n+1\right)  \lambda_{1}};
\]
and when $i\geq2$,%
\[
D_{i}\sigma_{2}=\sqrt{\left\vert \lambda\right\vert ^{2}+2}-\lambda_{i}%
>\sqrt{\lambda_{1}^{2}+\lambda_{i}^{2}}-\lambda_{i}\geq\left(  \sqrt
{2}-1\right)  \lambda_{1},
\]
since function $\sqrt{\lambda_{1}^{2}+\lambda_{i}^{2}}-\lambda_{i}$ is
decreasing in terms of $\lambda_{i}.$

The gradient square $\left\vert \nabla_{F}v\right\vert ^{2}$ for any smooth
function $v$ with respect to the inverse \textquotedblleft
metric\textquotedblright\ $\left(  F_{ij}\right)  $ is defined as%
\[
\left\vert \nabla_{F}v\right\vert ^{2}=\sum_{i,j=1}^{n}F_{ij}\partial
_{i}v\partial_{j}v.
\]

\subsection{Jacobi inequality}

\label{sec:Jacobi}

Our objective in this subsection is to get a \textit{quantitative} subsolution
inequality for the maximum eigenvalue.

\begin{prop}
\label{prop:Jac} Let $u$ be a smooth solution to (\ref{Esigma2}) $\sigma
_{2}\left(  \lambda\right)  =1.$ Suppose that $\lambda_{1}>\lambda_{2}%
\geq\cdots\geq\lambda_{n}\geq-K$ and $\lambda_{1}\geq\Lambda\left(
n,K\right)  $ for some sufficiently large $\Lambda\left(  n,K\right)  $ at
$x=p.$ Set $b=\ln\lambda_{1}.$ Then we have at $p$%
\begin{equation}
\bigtriangleup_{F}b\geq\varepsilon\left\vert \nabla_{F}b\right\vert ^{2}
\label{Jacp}%
\end{equation}
for $\varepsilon=1/4,$ say.
\end{prop}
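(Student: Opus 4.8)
The plan is to compute $\bigtriangleup_F b$ directly by differentiating $b=\ln\lambda_1$ twice and using the equation, then absorb the various error terms into $\varepsilon|\nabla_F b|^2$ using the smallness of $1/\lambda_1$ guaranteed by the hypothesis $\lambda_1\ge\Lambda(n,K)$. First I would diagonalize $D^2u$ at $p$, so that $F_{ij}$ is also diagonal with entries $F_{ii}=\sigma_1-\lambda_i$, and recall the standard first- and second-derivative formulas for a simple eigenvalue: $\partial_e\lambda_1=u_{11e}$ and $\partial_{ee}\lambda_1=u_{11ee}+2\sum_{k>1}\frac{u_{1ke}^2}{\lambda_1-\lambda_k}$. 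Hence $\bigtriangleup_F\lambda_1=\sum_i F_{ii}u_{11ii}+2\sum_i\sum_{k>1}F_{ii}\frac{u_{1ki}^2}{\lambda_1-\lambda_k}$, and the first sum is handled by differentiating the PDE \eqref{Esigma2} twice in the $e_1$ direction: $\sum_i F_{ii}u_{11ii}=-\sum_{i,j}F_{ij,kl}u_{ij1}u_{kl1}$ where $F_{ij,kl}$ is the second derivative of $\sigma_2$, which for $\sigma_2$ is explicit and equals $\delta_{ij}\delta_{kl}-\delta_{ik}\delta_{jl}$, giving $\sum_i F_{ii}u_{11ii}=\sum_{i\ne j}u_{ij1}^2-\left(\sum_i u_{ii1}\right)^2+\big(\text{terms from }\sum u_{iik}^2\big)$ — more precisely $-\sum F_{ij,kl}u_{ij1}u_{kl1}=\sum_{i\ne j}u_{1ij}^2-(\bigtriangleup_{,1}u)^2+\sum_i u_{ii1}^2$, i.e. the "good" third-derivative terms $\sum_{i,j}u_{ij1}^2$ minus $(\bigtriangleup u)_{,1}^2$. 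Then $\bigtriangleup_F b=\frac{\bigtriangleup_F\lambda_1}{\lambda_1}-|\nabla_F b|^2$, so I need to show $\frac{1}{\lambda_1}\bigtriangleup_F\lambda_1\ge(1+\varepsilon)|\nabla_F b|^2=(1+\varepsilon)\sum_i F_{ii}\frac{u_{11i}^2}{\lambda_1^2}$.

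The heart of the matter is therefore the pointwise third-derivative inequality
\[
\sum_{i,j}u_{ij1}^2-(\bigtriangleup u)_{,1}^2+2\sum_i\sum_{k>1}F_{ii}\frac{u_{1ki}^2}{\lambda_1-\lambda_k}\ \ge\ (1+\varepsilon)\sum_i F_{ii}\frac{u_{11i}^2}{\lambda_1},
\]
after dividing through by $\lambda_1$ — note $\bigtriangleup_F\lambda_1/\lambda_1$ already carries a $1/\lambda_1$, while on the right $|\nabla_F b|^2$ carries $1/\lambda_1^2$, so really I must show $\sum_{i,j}u_{ij1}^2-(\bigtriangleup u)_{,1}^2+2\sum_i\sum_{k>1}F_{ii}\frac{u_{1ki}^2}{\lambda_1-\lambda_k}\ge(1+\varepsilon)\sum_i F_{ii}\frac{u_{11i}^2}{\lambda_1}$. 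I would split the sum over the "diagonal" third derivatives $u_{11i}$ (the ones appearing on the right) from the genuinely "off-diagonal" ones. The key algebraic input is the differentiated equation itself: differentiating $\sigma_2(\lambda)=1$ once in direction $e_i$ gives $\sum_j F_{jj}u_{jji}=0$, i.e. $(\sigma_1-\lambda_1)u_{11i}=-\sum_{j>1}(\sigma_1-\lambda_j)u_{jji}$, which lets me trade $u_{11i}$ for the other diagonal third derivatives and is exactly what makes the constant work out. For the terms with $i=1$, I use $(\sigma_1-\lambda_1)\frac{u_{111}^2}{\lambda_1}$ against the piece $2F_{11}\sum_{k>1}\frac{u_{11k}^2}{\lambda_1-\lambda_k}$ and against $\sum_j u_{1j1}^2$; for the terms with $i\ge 2$ the main balancing is between $F_{ii}\frac{u_{11i}^2}{\lambda_1}$, the off-diagonal contribution $2F_{kk}\frac{u_{1ki}^2}{\lambda_1-\lambda_k}$ with $k=i$, and $u_{1i1}^2$ from $\sum u_{ij1}^2$.

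The main obstacle — and the reason this is called an "elusive" inequality — is that the term $-(\bigtriangleup u)_{,1}^2 = -\left(\sum_i u_{ii1}\right)^2$ is negative and of the same order as the good terms, so a naive Cauchy–Schwarz loses; one must exploit the constraint $\sum_j F_{jj}u_{jj1}=0$ to see that $(\sum_i u_{ii1})^2$ is genuinely controlled by $\sum_i u_{ii1}^2$ with a favorable constant, or equivalently expand $(\sum u_{ii1})^2$ and cancel against $\sum u_{ij1}^2$ cross terms. I expect the argument to require a case split according to whether $\lambda_1-\lambda_2$ is large or comparable to $\lambda_1$ (so that the denominators $\lambda_1-\lambda_k$ are or are not close to $\lambda_1$), and in the "bad" case $\lambda_2$ close to $\lambda_1$ one leans on the semiconvexity bound $\lambda_n\ge -K$ together with $\lambda_1\ge\Lambda(n,K)$ to control the remaining eigenvalues: since $\sigma_2(\lambda)=1$ and $\lambda_1,\lambda_2$ are both huge, the product $\lambda_1\lambda_2$ is huge and must be cancelled by other pairs, which combined with $\lambda_j\ge-K$ forces a definite negative $\lambda_j$ of size $\sim -1/\lambda_1$ somewhere, pinning down the geometry enough to close the estimate. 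Collecting the surviving error terms, each of which carries an extra factor $1/\lambda_1$ or $K/\lambda_1$, and choosing $\Lambda(n,K)$ large makes them smaller than $\frac{1}{4}|\nabla_F b|^2$, which is $\varepsilon=1/4$.
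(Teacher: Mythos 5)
Your setup coincides with the paper's: the first and second variation formulas for the simple eigenvalue $\lambda_1$, the substitution of the twice-differentiated equation for $\sum_i F_{ii}u_{11ii}$, and the recognition that the negative term $-\big(\sum_i u_{ii1}\big)^2$ must be tamed by the linearized constraint $\sum_j F_{jj}u_{jj1}=0$. But two of the steps you describe would fail. The case split on whether $\lambda_2$ is comparable to $\lambda_1$ is a wrong turn: semiconvexity and the equation together force $|\lambda_k|\le C(n,K)$ for \emph{all} $k\ge2$ (this is \eqref{IEallbutonebounded}; if $\lambda_2\ge\delta\lambda_1$ then $\sigma_2\ge\delta\lambda_1^2-C(n,K)\lambda_1\gg1$), so your ``bad case'' is vacuous, and the heuristic you offer there --- a huge $\lambda_1\lambda_2$ cancelled by pairs that are bounded below by $-CK\lambda_1$, producing some $\lambda_j\sim-1/\lambda_1$ --- is not coherent: the negative pairs are $O(\lambda_1)$, not $O(\lambda_1^2)$, so the case simply cannot occur. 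Moreover the bound $|\lambda'|\le C(n,K)$ is not a convenience but the essential input: it yields the asymptotics $f_1=\sigma_1-\lambda_1\approx(1+\tfrac12|\lambda'|^2)/\lambda_1$ and $f_k\approx\lambda_1$ for $k\ge2$, on which the whole estimate runs.

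The decisive gap is your closing claim that, after cancellation, every surviving term carries an extra factor $1/\lambda_1$ or $K/\lambda_1$ and is absorbed by choosing $\Lambda(n,K)$ large. That is not how the inequality closes. Writing $t_i=u_{ii1}$, the constraint converts $\sum_{i\ne j}(-t_it_j)$ into $\big[(|\lambda|^2+2)|t|^2-\langle\lambda,t\rangle^2\big]/\sigma_1^2$; the top-order pieces $\pm\lambda_1^2t_1^2$ then cancel \emph{exactly}, not up to lower order, and what remains is a competition between terms of the \emph{same} order $\lambda_1^2|t'|^2$: a favorable coefficient $3+o(1)$ (one from $(|\lambda|^2+2)|t'|^2$, two from the eigenvalue-Hessian term $2\sigma_1^2 f_k/(\lambda_1-\lambda_k)\approx2\lambda_1^2$) against an unfavorable $2/(1-\varepsilon)$ produced by the Cauchy--Schwarz absorption of the cross term $2\lambda_1|\lambda'|\,|t_1|\,|t'|$. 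The Jacobi inequality holds precisely because $3>2/(1-\varepsilon)$ for $\varepsilon<1/3$; no largeness of $\Lambda$ substitutes for this margin, and your plan provides no mechanism to detect whether it is there. (Similarly, in the $u_{11i}^2$, $i\ge2$, block the required sign is $2\lambda_1+\big(\tfrac{2\lambda_1}{\lambda_1-\lambda_i}-1-\varepsilon\big)f_i\ge0$, which uses $\lambda_i\ge-K$ directly through $\tfrac{2\lambda_1}{\lambda_1+K}\ge1+\varepsilon$; you grouped these terms correctly but did not verify the sign.)
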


\begin{proof}
\textit{Step 1. Differentiation of maximum eigenvalue.}

We derive the following formulas for smooth function $b=\ln\lambda_{1}$
\begin{equation}
\left\vert \nabla_{F}b\right\vert ^{2}=\left(  b^{\prime}\right)  ^{2}%
\sum_{k=1}^{n}f_{k}u_{11k}^{2} \label{gradientb}%
\end{equation}
and%
\begin{equation}
\bigtriangleup_{F}b=\left\{
\begin{array}
[c]{c}%
b^{\prime}\left[  2\sum_{i>j}-u_{ii1}u_{jj1}+\sum_{k>1}\frac{2f_{k}}%
{\lambda_{1}-\lambda_{k}}u_{kk1}^{2}\right]  +b^{\prime\prime}f_{1}u_{111}%
^{2}\ \ \ \text{(I)}\\
+\sum_{i>1}\left[  2b^{\prime}+\left(  \frac{2b^{\prime}}{\lambda_{1}%
-\lambda_{i}}+b^{\prime\prime}\right)  f_{i}\right]  u_{11i}^{2}%
\ \ \ \ \ \ \ \ \ \ \ \ \ \text{(II)}\\
+\sum_{i>j>1}2b^{\prime}\left(  1+\frac{f_{i}}{\lambda_{1}-\lambda_{j}}%
+\frac{f_{j}}{\lambda_{1}-\lambda_{i}}\right)  u_{ij1}^{2}\ \ \ \text{(III)}%
\end{array}
\right.  \label{lapb}%
\end{equation}
at $p,$ where $D^{2}u$ is assumed to be diagonalized and $f\left(
\lambda\right)  =\sigma_{2}\left(  \lambda\right)  .$

To this end, we start with the partial derivatives of the distinct eigenvalue
$\lambda_{1}$ with respect to arbitrary unit vector $e\in\mathbb{R}^{n}$ at
$p$%
\begin{align*}
\partial_{e}\lambda_{1}  &  =\partial_{e}u_{11},\\
\partial_{ee}\lambda_{1}  &  =\partial_{ee}u_{11}+\sum_{k>1}2\frac{\left(
\partial_{e}u_{1k}\right)  ^{2}}{\lambda_{1}-\lambda_{k}},
\end{align*}
which can be reached for example by implicitly differentiating the
characteristic equation
\[
\det(D^{2}u-\lambda_{1}I)=0
\]
near any point where $\lambda_{1}$ is distinct from the other eigenvalues.

Thus we get (\ref{gradientb}) at $p$%
\[
\left\vert \nabla_{F}b\right\vert ^{2}=\sum_{k=1}^{n}F_{kk}\left(  b^{\prime
}\right)  ^{2}u_{11k}^{2}=\left(  b^{\prime}\right)  ^{2}\sum_{k=1}^{n}%
f_{k}u_{11k}^{2}.
\]

From
\[
\partial_{ee}b=b^{\prime}\partial_{ee}\lambda_{1}+b^{\prime\prime}\left(
\partial_{e}\lambda_{1}\right)  ^{2},
\]
we conclude that at $p$
\[
\partial_{ee}b=b^{\prime}\left[  \partial_{ee}u_{11}+\sum_{k>1}2\frac{\left(
\partial_{e}u_{1k}\right)  ^{2}}{\lambda_{1}-\lambda_{k}}\right]
+b^{\prime\prime}\left(  \partial_{e}u_{11}\right)  ^{2},
\]
and
\begin{align}
\bigtriangleup_{F}b  &  =\sum_{\gamma=1}^{n}F_{\gamma\gamma}\partial
_{\gamma\gamma}b\nonumber\\
&  =\sum_{\gamma=1}^{n}F_{\gamma\gamma}b^{\prime}\left(  \partial
_{\gamma\gamma}u_{11}+\sum_{k>1}2\frac{\left(  u_{1k\gamma}\right)  ^{2}%
}{\lambda_{1}-\lambda_{k}}\right)  +\sum_{\gamma=1}^{n}F_{\gamma\gamma
}b^{\prime\prime}u_{11\gamma}^{2}. \label{E4thorder}%
\end{align}

Next we substitute the fourth order derivative terms $\partial_{\gamma\gamma
}u_{11}$ in the above by lower order derivative terms. Differentiating
equation (\ref{Egradient}) $\sum_{\alpha,\beta=1}^{n}F_{\alpha\beta}%
u_{j\alpha\beta}=0$ and using (\ref{F_ij}),\ we obtain
\begin{align*}
\bigtriangleup_{F}u_{ij}  &  =\sum_{\alpha,\beta=1}^{n}F_{\alpha\beta
}u_{ji\alpha\beta}=\sum_{\alpha,\beta=1}^{n}-\partial_{i}F_{\alpha\beta
}u_{j\alpha\beta}=\sum_{\alpha,\beta=1}^{n}-\left(  \bigtriangleup
u_{i}\ \delta_{\alpha\beta}-u_{i\alpha\beta}\right)  u_{j\alpha\beta}\\
&  =\sum_{\alpha=1}^{n}-\left(  \bigtriangleup u_{i}-u_{i\alpha\alpha}\right)
u_{j\alpha\alpha}+\sum_{\alpha\neq\beta}u_{i\alpha\beta}u_{j\alpha\beta}%
=\sum_{\alpha\neq\beta}\left(  u_{i\alpha\beta}u_{j\alpha\beta}-u_{i\beta
\beta}u_{j\alpha\alpha}\right)  .
\end{align*}
Plugging the above identity with $i=j=1$ in (\ref{E4thorder}), we have at $p$
\[
\bigtriangleup_{F}b=b^{\prime}\left[  \sum_{i\neq j}\left(  u_{ij1}%
^{2}-u_{ii1}u_{jj1}\right)  +\sum_{\gamma=1}^{n}\sum_{k>1}2F_{\gamma\gamma
}\frac{u_{1k\gamma}^{2}}{\lambda_{1}-\lambda_{k}}\right]  +\sum_{\gamma=1}%
^{n}b^{\prime\prime}F_{\gamma\gamma}u_{11\gamma}^{2}.
\]
Regrouping those terms $u_{\heartsuit\heartsuit1}$ (with $u_{111}$),
$u_{11\heartsuit},$ and $u_{\heartsuit\clubsuit1}$ in the last expression,
noting $F_{\gamma\gamma}=f_{\gamma}$ at $p,$ we obtain (\ref{lapb}).

\textit{Step 2. Convexity of the level set of the equation }$\left\{  \left.
M\right\vert F\left(  M\right)  =0\right\}  .$

We rewrite the cross terms $2\sum_{i>j}-u_{ii1}u_{jj1}=2D^{2}F\left(
D^{2}u_{1},D^{2}u_{1}\right)  $ inside (I) of (\ref{lapb}) in a
\textquotedblleft positive\textquotedblright\ way
\begin{equation}
2\sum_{i>j}-u_{ii1}u_{jj1}=\sum_{i\neq j}-t_{i}t_{j}=\frac{\left(  \left\vert
\lambda\right\vert ^{2}+2\right)  \left\vert t\right\vert ^{2}-\left\langle
\lambda,t\right\rangle ^{2}}{\sigma_{1}^{2}}, \label{positive balance}%
\end{equation}
where we denoted $t_{i}=u_{ii1}.$ In fact, squaring the equation
(\ref{Egradient}) $\sum_{i=1}^{n}f_{i}t_{i}=0$ at $p,$ or equivalently
\[
\sigma_{1}\ \left(  t_{1}+\cdots+t_{n}\right)  =\lambda_{1}t_{1}%
+\cdots+\lambda_{n}t_{n},
\]
we have%
\[
\sigma_{1}^{2}\left(  \left\vert t\right\vert ^{2}+\sum_{i\neq j}t_{i}%
t_{j}\right)  =\left\langle \lambda,t\right\rangle ^{2}.
\]
Hence, the above \textquotedblleft positive\textquotedblright\ way follows
from equation (\ref{Esigma2}) $\sigma_{1}^{2}=\left\vert \lambda\right\vert
^{2}+2.$

\textit{Step 3. Consequence of semiconvexity }$\lambda_{i}\geq-K.$

We are ready to prove the Jacobi inequality (\ref{Jacp}). Note that all the
\textquotedblleft off-diagonal\textquotedblright\ terms in (III) of
(\ref{lapb}) are nonnegative; it follows that%
\[
\bigtriangleup_{F}b-\varepsilon\left\vert \nabla_{F}b\right\vert ^{2}%
\geq(I)-\varepsilon\left(  b^{\prime}\right)  ^{2}f_{1}t_{1}^{2}%
+(II)-\varepsilon\left(  b^{\prime}\right)  ^{2}\sum_{i>1}f_{i}u_{11i}^{2}.
\]

Now%
\begin{gather*}
(II)-\varepsilon\left(  b^{\prime}\right)  ^{2}\sum_{i>1}f_{i}u_{11i}%
^{2}=\left(  b^{\prime}\right)  ^{2}\sum_{i>1}\left[  2\lambda_{1}+\left(
\frac{2\lambda_{1}}{\lambda_{1}-\lambda_{i}}-1-\varepsilon\right)
f_{i}\right]  u_{11i}^{2}\\
\geq\left(  b^{\prime}\right)  ^{2}\sum_{i>1}\underset{\lambda_{i}%
\geq-K}{\underbrace{\left(  \frac{2\lambda_{1}}{\lambda_{1}+K}-1-\varepsilon
\right)  }}\ \underset{\text{(\ref{F_ij})}}{\underbrace{\left(  \sqrt
{\left\vert \lambda\right\vert ^{2}+2}-\lambda_{i}\right)  }}u_{11i}^{2}\geq0
\end{gather*}
for
\[
\lambda_{1}\geq\frac{1+\varepsilon}{1-\varepsilon}K\ \ \text{with }%
\varepsilon<1.
\]

Plugging (\ref{positive balance}) in $(I)$ of (\ref{lapb}), we have%
\[
\left(  I\right)  -\varepsilon b^{\prime2}f_{1}t_{1}^{2}=\frac{1}{\lambda
_{1}\sigma_{1}^{2}}\left\{
\begin{array}
[c]{c}%
\left(  \left\vert \lambda\right\vert ^{2}+2\right)  \left\vert t\right\vert
^{2}-\left\langle \lambda,t\right\rangle ^{2}+\sigma_{1}^{2}\sum_{k>1}%
\frac{2f_{k}}{\lambda_{1}-\lambda_{k}}\mathbf{t}_{k}^{2}\\
-\left(  1+\varepsilon\right)  \frac{\sigma_{1}^{2}}{\lambda_{1}^{2}%
}\underset{1+0.5\lambda^{\prime2}}{\underbrace{\lambda_{1}f_{1}}}t_{1}^{2}%
\end{array}
\right\}  .
\]
Observe that
\begin{equation}
f_{1}=\sigma_{1}-\lambda_{1}=\sqrt{\left\vert \lambda\right\vert ^{2}%
+2}-\lambda_{1}=\frac{\left\vert \lambda^{\prime}\right\vert ^{2}+2}%
{\sqrt{\left\vert \lambda\right\vert ^{2}+2}+\lambda_{1}}<\frac{0.5\left\vert
\lambda^{\prime}\right\vert ^{2}+1}{\lambda_{1}}, \label{ID1sigma2}%
\end{equation}
where $\lambda^{\prime}=\left(  \lambda_{2},\cdots,\lambda_{n}\right)  .$ Also
we see that
\begin{equation}
\left\vert \lambda_{k}\right\vert \leq C\left(  n,K\right)  \ \ \text{for\ \ }%
k\geq2 \label{IEallbutonebounded}%
\end{equation}
from (\ref{ID1sigma2}) (cf. [CY, p.663]). Indeed by the assumption
$\lambda_{n}\geq-K$ and $\left\vert \lambda^{\prime}\right\vert \leq
n\lambda_{1},$ we have%
\begin{align*}
-\left(  n-2\right)  K+\left\vert \lambda_{+}^{\prime}\right\vert  &
\leq\lambda_{n}+\cdots+\lambda_{2}=\frac{\left\vert \lambda^{\prime
}\right\vert ^{2}+2}{\sqrt{\left\vert \lambda\right\vert ^{2}+2}+\lambda_{1}%
}\\
&  <2+\frac{\left\vert \lambda^{\prime}\right\vert ^{2}}{\left(  1+1/n\right)
\left\vert \lambda^{\prime}\right\vert }\leq2+\left\vert \lambda_{-}^{\prime
}\right\vert +\frac{\left\vert \lambda_{+}^{\prime}\right\vert }{\left(
1+1/n\right)  },
\end{align*}
where $\lambda_{+}=\left(  \lambda_{2},\cdots,\lambda_{m}\right)  $ and
$\lambda_{-}=\left(  \lambda_{m+1},\cdots,\lambda_{n}\right)  $ for
$\lambda_{2}\geq\cdots\geq\lambda_{m}\geq0\geq\lambda_{m+1}\geq\cdots
\geq\lambda_{n}.$ Solving the above inequality for $\left\vert \lambda
_{+}\right\vert ,$ we get
\[
\left\vert \lambda_{+}\right\vert <\left(  n+1\right)  \left[  2+2\left(
n-2\right)  K\right]  =C\left(  n,K\right)  .
\]
Consequently, $\lambda_{1}\left(  x\right)  $ is a distinct eigenvalue, thus
smooth near $x=p$ if
\begin{equation}
\lambda_{1}\left(  p\right)  >C_{\text{smooth}}\left(  n,K\right)  ;
\label{smoothcut}%
\end{equation}%
\begin{equation}
c\left(  n,K\right)  \leq\lambda_{1}f_{1}\leq C\left(  n,K\right)
\ \ \text{and for }k\geq2,\ c\left(  n,K\right)  \leq\frac{f_{k}}{\lambda_{1}%
}\leq C\left(  n,K\right)  \ ; \label{IEDfrange}%
\end{equation}
and also%
\[
\frac{\sigma_{1}^{2}}{\lambda_{1}^{2}}=1+o\left(  1\right)  \ \ \text{and
}\sigma_{1}^{2}\frac{2f_{k}}{\lambda_{1}-\lambda_{k}}=\left[  2+o\left(
1\right)  \right]  \lambda_{1}^{2}\
\]
for large enough $\lambda_{1}$ and $k\geq2.\ $Denoting $t^{\prime}=\left(
t_{2},\dots,t_{n}\right)  .$ It follows that%
\begin{gather*}
\hspace*{-0.8in}\lambda_{1}\sigma_{1}^{2}\left[  \left(  I\right)
-\varepsilon b^{\prime2}f_{1}t_{1}^{2}\right]  \geq\\
\hspace*{-0.8in}\left.
\begin{array}
[c]{c}%
\left\{  \underline{\underline{\lambda_{1}^{2}}}+\left[  1-\varepsilon
-o\left(  1\right)  \right]  \left(  1+0.5\left\vert \lambda^{\prime
}\right\vert ^{2}\right)  \right\}  t_{1}^{2}+\left\{  \left[  \mathbf{3}%
+o\left(  1\right)  \right]  \lambda_{1}^{2}+\underline{\left\vert
\lambda^{\prime}\right\vert ^{2}}+2\right\}  \left\vert t^{\prime}\right\vert
^{2}\\
\\
-\underline{\underline{\lambda_{1}^{2}t_{1}^{2}}}-\underline{\left\vert
\lambda^{\prime}\right\vert ^{2}\left\vert t^{\prime}\right\vert ^{2}%
}-2\underset{\text{redistribute}}{\underbrace{t_{1}\left\vert \lambda^{\prime
}\right\vert \lambda_{1}\left\vert t^{\prime}\right\vert }}%
\end{array}
\right. \\
\geq\left.
\begin{array}
[c]{c}%
\left\{  \left[  1-\varepsilon-o\left(  1\right)  \right]  \left(
1+0.5\left\vert \lambda^{\prime}\right\vert ^{2}\right)  \right\}  t_{1}%
^{2}+\left[  \left(  \mathbf{3}+o\left(  1\right)  \right)  \lambda_{1}%
^{2}+2\right]  \left\vert t^{\prime}\right\vert ^{2}\\
-\left[  \left(  1-\varepsilon-o\left(  1\right)  \right)  \left(
1+0.5\left\vert \lambda^{\prime}\right\vert ^{2}\right)  \right]  t_{1}%
^{2}-\frac{\left\vert \lambda^{\prime}\right\vert ^{2}}{\left(  1-\varepsilon
-o\left(  1\right)  \right)  \left(  1+0.5\left\vert \lambda^{\prime
}\right\vert ^{2}\right)  }\lambda_{1}^{2}\left\vert t^{\prime}\right\vert
^{2}%
\end{array}
\right. \\
\geq\left\{  \left[  \mathbf{3}+o\left(  1\right)  \right]  -\frac
{2}{1-\varepsilon-o\left(  1\right)  }\right\}  \lambda_{1}^{2}\left\vert
t^{\prime}\right\vert ^{2}\geq0
\end{gather*}
for $\varepsilon<1/3,$ say $\varepsilon=1/4$ for simple notation and large
enough smooth
\begin{equation}
\lambda_{1}\geq\Lambda\left(  n,K\right)  >1+C_{\text{smooth}}\left(
n,K\right)  \label{Lambdacut}%
\end{equation}
with $C_{\text{smooth}}\left(  n,K\right)  $ from (\ref{smoothcut}).

We have proved the pointwise Jacobi inequality (\ref{Jacp}) in Proposition 2.1.
\end{proof}

\subsection{Integral Jacobi inequality}

Eventually in the proof of our Theorem 1.1, we use the following integral form
of (\ref{Jacp}).

From now on, repeated indices represent summation, unless otherwise indicated.

\begin{prop}
Let $u$ be a smooth $K$-convex (namely, $D^{2}u\geq-KI$) solution to $F\left(
D^{2}u\right)  =\sigma_{2}=1$ on $B_{3}$, and define the Lipschitz quantity
\[
b=\frac{1}{4}\ln\max(\Lambda,\lambda_{max}),
\]
where the sufficiently large $\Lambda=\Lambda(n,K)$ is from (\ref{Lambdacut}).
Then for all nonnegative $\varphi\in C_{c}^{\infty}(B_{3})$, there holds the
inequality
\begin{equation}
0\geq\int_{B_{3}}F_{ij}\varphi_{i}b_{j}+\varphi F_{ij}b_{i}b_{j}\,dx.
\label{IJac}%
\end{equation}

\end{prop}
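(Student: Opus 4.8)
The plan is to derive the integral inequality \eqref{IJac} from the pointwise Jacobi inequality \eqref{Jacp} of Proposition \ref{prop:Jac} by a standard approximation/cutoff argument, handling carefully the region where $\lambda_{\max}$ is not smooth (where eigenvalues collide) and the region where $\lambda_{\max} \le \Lambda$, on which $b$ is locally constant.

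\medskip
\textit{Step 1: Good set and bad set.} Let $\Omega = \{x \in B_3 : \lambda_{\max}(D^2u(x)) > \Lambda\}$, an open set, and let $\Omega' = \{x \in \Omega : \lambda_1(x) > \lambda_2(x)\}$, where $\lambda_1 = \lambda_{\max}$ is a simple eigenvalue and hence $b$ is smooth. On $B_3 \setminus \Omega$ we have $b \equiv \frac14\ln\Lambda$ constant, so $b_i = 0$ a.e.\ there and $b$ contributes nothing to either integrand; thus it suffices to integrate over $\Omega$. On $\Omega \setminus \Omega'$ (the collision set $\lambda_1 = \lambda_2$), I would argue that this set has measure zero, or more robustly, that $b$ is still locally Lipschitz on all of $B_3$ (the maximum eigenvalue is a Lipschitz function of the entries of $D^2u$, which is smooth) so $\nabla b$ exists a.e., and Rademacher plus the chain rule let us make sense of $F_{ij}b_ib_j$ as an $L^1_{loc}$ function. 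The subtlety is that the pointwise inequality \eqref{Jacp} was proved only where $\lambda_1$ is simple; to push it across the collision set I would either (a) perturb $b$ to $b_\delta = \frac14\ln\max(\Lambda, \lambda_1 + \delta(\lambda_1^2 - \sigma_1^2/\text{something}))$ — no, cleaner — (b) use the standard device of replacing $\lambda_{\max}$ by the smooth symmetric function $\frac1m\ln\big(\sum_j e^{m\lambda_j}\big)$ or by $\big(\sum_j \lambda_j^{2p}\big)^{1/2p}$ for large $p$, establish the differential inequality for these smooth approximants (the proof of Proposition \ref{prop:Jac} should be stable under this since the dangerous terms come with favorable signs), integrate against $\varphi$, then let $p \to \infty$. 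In this paper's context the simplest is: since Proposition \ref{prop:Jac} gives \eqref{Jacp} classically on the open dense set $\Omega'$, and $b$ is globally Lipschitz, it is enough to show \eqref{IJac} holds with $B_3$ replaced by any compact subset of $\Omega'$ and then exhaust.

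\medskip
\textit{Step 2: Integration by parts on the good set.} On $\Omega'$, $b$ is smooth, and \eqref{Jacp} reads $\triangle_F b = F_{ij}b_{ij} \ge \varepsilon F_{ij}b_ib_j$ pointwise (with $\varepsilon = 1/4$; note the normalization $b = \frac14\ln\lambda_{\max}$ here versus $b = \ln\lambda_1$ in Proposition \ref{prop:Jac} just rescales, and one checks the constant still works — indeed with $b = \frac14 \ln\lambda_1$ we get $\triangle_F b \ge \frac14 \cdot 4 |\nabla_F b|^2 = |\nabla_F b|^2$, matching the preamble's claim $\sum F_{ij}b_{ij} \ge \sum F_{ij}b_ib_j$). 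Using the divergence form \eqref{lin}, $F_{ij}b_{ij} = \partial_i(F_{ij}b_j)$ since $\partial_i F_{ij} = 0$ by \eqref{Egradient}; this is the crucial point that the linearized operator is in divergence form with divergence-free coefficients. Multiply \eqref{Jacp} by $\varphi \ge 0$ supported in $\Omega'$ and integrate:
\[
\int \varphi\, \partial_i(F_{ij}b_j)\,dx \ge \int \varphi F_{ij}b_ib_j\,dx,
\]
and integrating the left side by parts gives $-\int F_{ij}\varphi_i b_j\,dx \ge \int \varphi F_{ij}b_ib_j\,dx$, which is exactly \eqref{IJac}.

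\medskip
\textit{Step 3: General cutoff.} For arbitrary nonnegative $\varphi \in C_c^\infty(B_3)$, I would multiply by a cutoff $\eta_k$ that vanishes near $B_3 \setminus \Omega'$ and increases to $1$ on $\Omega'$, apply Step 2 to $\varphi\eta_k$, and pass to the limit $k \to \infty$, using that $F_{ij}b_j$ and $\varphi F_{ij}b_ib_j$ are in $L^1_{loc}$ (here the quantitative bound $f_k/\lambda_1 \le C(n,K)$ from \eqref{IEDfrange} bounds the entries of $F_{ij}$ on $\Omega$ in terms of $\lambda_1 = e^{4b}$, so these quantities are controlled) and dominated convergence. \textbf{The main obstacle} is Step 1: rigorously handling the non-smoothness of $\lambda_{\max}$ across eigenvalue collisions and showing the bad set is negligible for the integral identity — the clean fix is the symmetric-function smoothing $b_p = \frac{1}{8p}\ln\sum_j \lambda_j^{2p}$ (for $D^2u$ near the relevant branch one arranges $\lambda_j > 0$ on the support of interest, or uses $\sum e^{2p\lambda_j}$ instead), for which one reruns the computation of Proposition \ref{prop:Jac} — the off-diagonal terms in (III) and the semiconvexity term in (II) keep their signs, and the key balance \eqref{positive balance} is a consequence of the equation unaffected by which symmetric function one differentiates — obtains $\triangle_F b_p \ge \varepsilon |\nabla_F b_p|^2 + (\text{error} \to 0)$, integrates by parts as in Step 2, and sends $p \to \infty$ with $b_p \to b$ in $W^{1,2}_{loc}$.
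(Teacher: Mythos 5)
Your Step 2 --- the pointwise Jacobi inequality \eqref{Jacp} combined with the divergence-free structure $\partial_iF_{ij}=0$ from \eqref{lin}, then integration by parts against $\varphi$ --- is exactly the paper's argument. But Steps 1 and 3 misidentify where the difficulty lies. The eigenvalue-collision set inside $\Omega=\{\lambda_{\max}>\Lambda\}$ is empty, not merely negligible: by \eqref{IEallbutonebounded} every eigenvalue other than $\lambda_{1}$ is bounded by $C(n,K)$, and $\Lambda$ is chosen in \eqref{Lambdacut} to exceed $1+C_{\text{smooth}}(n,K)$ precisely so that $\lambda_{1}$ is simple, hence smooth, wherever $\lambda_{\max}>\Lambda$. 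So your $\Omega'$ equals $\Omega$, and the entire symmetric-function smoothing machinery ($b_{p}=\frac{1}{8p}\ln\sum_{j}\lambda_{j}^{2p}$, rerunning the proof of Proposition \ref{prop:Jac} for the approximants, passing $p\to\infty$) is unnecessary --- that is the whole point of truncating at $\Lambda$ in the definition of $b$.

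The genuine issue is the one your argument passes over: the level set $\{\lambda_{\max}=\Lambda\}$, across which $b$ switches from the constant $\frac14\ln\Lambda$ to the smooth $\frac14\ln\lambda_{\max}$. When you cut off with $\eta_{k}$ and apply Step 2 to $\varphi\eta_{k}$, the term $\int\varphi F_{ij}(\eta_{k})_{i}b_{j}\,dx$ does not vanish in the limit by dominated convergence ($D\eta_{k}$ is not dominated); it converges to a boundary integral $\int_{\partial\Omega}\varphi F_{ij}b_{j}\nu_{i}$, whose sign must be checked. The sign is favorable because $b$ attains its minimum $\frac14\ln\Lambda$ on $\partial\Omega$ from inside $\Omega$, so on a regular level set $Db$ is a nonnegative multiple of the inward normal and $F_{ij}b_{j}\nu_{i}\le0$ by positivity of $(F_{ij})$. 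The paper makes this clean by invoking Sard's theorem to perturb $\Lambda$ to a regular value, so that $b$ is smooth off a measure-zero set, and then integrating by parts over a family of approximating domains. Adding this sign verification (or the Sard perturbation) would close your proof; as written, the limit in Step 3 is a gap.
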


\begin{proof}
It is easy to see that the Lipschitz function $b\left(  x\right)  $ is smooth
away from the level set $\left\{  \left.  x\right\vert \lambda_{\max}\left(
x\right)  =\Lambda\right\}  .$ By Sard's theorem, we perturb $\Lambda$ a tiny
bit, still denoted by $\Lambda,$ so that the Lipschitz $b\left(  x\right)  $
is smooth away from a zero measure set. Integrating by parts the pointwise
Jacobi inequality (\ref{Jacp}) multiplied by $\varphi,$ over a family of
approximated domains of $B_{3}$ from the complement of the above zero measure
set, we reach the integral Jacobi inequality (\ref{IJac}).
\end{proof}

\subsection{Legendre-Lewy transform}

\label{sec:CoV} In the integral approach of \cite{WY} toward Hessian estimates
for (\ref{Esigma2}) with $n=3$, a mean value inequality, pertaining to the
area structure on the Lagrangian minimal surface $(x,Du(x))\in\mathbb{R}%
^{2n},$ is used to bound $b(x)$ at $x=0$ by its integral. However, for $n>3,$
an area-like structure is unclear to us.

To construct a mean value inequality for subsolution $b$, in principle, we
would apply the local maximum principle, but the ellipticity constants for the
linearized operator of $\sigma_{2}=1$ are not uniform. To circumvent this, we
show that $b$ is a subsolution of a new uniformly elliptic operator after a
change of variables, which we describe below.

The $K$-convexity of $u$ ensures that the smallest canonical angle of the
\textquotedblleft Lewy-sheared\textquotedblright\ \textquotedblleft
gradient\textquotedblright\ graph $(x,Du(x)+Kx)$ is uniformly positive, i.e.
$\theta_{min}:=\arctan(\lambda_{min}+K)>0$. This means we can make a well
defined Legendre reflection about the origin,
\begin{equation}
(x,Du(x)+Kx)=(Dw(y),y), \label{LL}%
\end{equation}
where $w(y)$ is the Legendre transform of $u+\frac{K}{2}|x|^{2}$. Note that
$y(x)=Du(x)+Kx$ is a diffeomorphism.

\medskip We show here that this transformation preserves the linearized
operator of any fully nonlinear PDE, \textit{not just} $F\left(
D^{2}u\right)  =\sigma_{2}.$ Geometrically, this is clear for $K=0$ and the
special Lagrangian equation $\sum_{i=1}^{n}\arctan(\lambda_{i})=\Theta$, since
at the level of \textquotedblleft gradient\textquotedblright\ graphs, the
transformation is just a reflection, or a $\pi/2$-$U\left(  n\right)  $
rotation followed by a conjugation, so it only changes the constant phase
$\Theta.$

\begin{prop}
[Transformation rule]\label{prop:trans} Let $u$ solve $F(D^{2}u(x))=0$, and
its Legendre-Lewy transform $w(y)$ solve $G(D^{2}w(y)=-F(-KI+D^{2}%
w(y)^{-1})=0$. Then $L_{F}\approx L_{G}$, in the sense that for all smooth
functions $\varphi$, we have
\begin{equation}
\frac{\partial F}{\partial M_{ij}}(D^{2}u(x))\frac{\partial^{2}\varphi
}{\partial x^{i}\partial x^{j}}(x)=\frac{\partial G}{\partial N_{ij}}%
(D^{2}w(y))\frac{\partial^{2}\varphi^{\ast}}{\partial y^{i}\partial y^{j}}(y),
\label{invarianceofL}%
\end{equation}
where $\varphi^{\ast}(y)=\varphi(x(y))$.
\end{prop}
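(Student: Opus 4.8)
The plan is to compute both sides of \eqref{invarianceofL} directly from the change of variables \eqref{LL} and the chain rule, and to verify that the two bilinear forms coincide. First I would record the basic differential relations implied by \eqref{LL}: writing $y^k = u_k(x) + Kx^k$ we have $\partial y^k/\partial x^j = u_{kj} + K\delta_{kj} = (KI + D^2u)_{kj}$, and since $w(y)$ is the Legendre transform of $u + \tfrac K2|x|^2$ we have $Dw(y) = x$, hence $D^2w(y) = (KI + D^2u(x))^{-1}$, i.e. $D^2u(x) = -KI + D^2w(y)^{-1}$. This is exactly the substitution making $G(D^2w) = -F(-KI + (D^2w)^{-1}) = -F(D^2u) = 0$, so $w$ solves its stated equation. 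Let me abbreviate $A = KI + D^2u = (D^2w)^{-1} =: B^{-1}$, so that the Jacobian of the map $x \mapsto y$ is $A$ and the Jacobian of its inverse $y \mapsto x$ is $B$.

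Next I would transform the second-order operator. For $\varphi^\ast(y) = \varphi(x(y))$ the chain rule gives, at corresponding points,
\begin{equation}
\frac{\partial^2 \varphi^\ast}{\partial y^i \partial y^j} = B_{ai}B_{bj}\,\frac{\partial^2 \varphi}{\partial x^a \partial x^b} + \frac{\partial^2 x^a}{\partial y^i \partial y^j}\,\frac{\partial \varphi}{\partial x^a}.
\end{equation}
The second term is the nuisance; the key observation is that it is annihilated when contracted against $\partial G/\partial N_{ij}$, because $G$ is a function only of the Hessian $D^2w$ and not of lower-order data, so the second-derivative part of the "pullback connection" drops out exactly as in the invariance of a linearized operator under a point transformation that fixes the dependent variable up to the Legendre involution. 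Concretely, $\partial G/\partial N_{ij}\cdot \partial^2 x^a/\partial y^i\partial y^j$ vanishes because $x^a = \partial w/\partial y^a$, so $\partial^2 x^a/\partial y^i \partial y^j = w_{aij}$ is totally symmetric, and the differentiated equation $G_{N_{ij}} w_{ijk} = 0$ (the $G$-analogue of \eqref{Egradient}) kills precisely this contraction. Hence
\begin{equation}
\frac{\partial G}{\partial N_{ij}}(D^2w)\,\frac{\partial^2 \varphi^\ast}{\partial y^i \partial y^j} = \frac{\partial G}{\partial N_{ij}}(D^2w)\, B_{ai}B_{bj}\,\frac{\partial^2 \varphi}{\partial x^a \partial x^b},
\end{equation}
and it remains to identify the coefficient matrix $B_{ai}B_{bj}\,\partial G/\partial N_{ij}$ with $\partial F/\partial M_{ab}(D^2u)$.

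Finally I would carry out that identification by differentiating the relation $G(N) = -F(-KI + N^{-1})$ in $N$: since $\partial (N^{-1})_{pq}/\partial N_{ij} = -(N^{-1})_{pi}(N^{-1})_{qj}$, the chain rule gives $\partial G/\partial N_{ij} = F_{M_{pq}}(-KI+N^{-1})\,(N^{-1})_{pi}(N^{-1})_{qj}$. Evaluating at $N = D^2w = B^{-1}$, so $N^{-1} = B = A^{-1}$ in the sense that $N^{-1} = B$... more carefully, $N = D^2w$, $N^{-1} = (D^2w)^{-1} = A$, hence $\partial G/\partial N_{ij} = F_{M_{pq}}(D^2u)\,A_{pi}A_{qj}$. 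Contracting with $B_{ai}B_{bj}$ and using $B_{ai}A_{pi} = \delta_{ap}$ (as $B = A^{-1}$) collapses the four factors to $F_{M_{ab}}(D^2u)$, which is exactly the claimed equality \eqref{invarianceofL}. The main obstacle is purely bookkeeping: keeping the roles of $N$, $N^{-1}$, $A$, $B$ straight and making sure the symmetrization of the Hessian (the entries $M_{ij}$ are symmetric, so $F_{M_{ij}}$ should be taken symmetric) is handled consistently; once that is pinned down, the computation is the two displays above plus the observation about the vanishing second-derivative term, which rests only on $G$ depending on $D^2w$ alone and on the differentiated equation $G_{N_{ij}}w_{ijk}=0$.
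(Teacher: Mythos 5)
Your proof is correct and is essentially the paper's argument run in the opposite direction: the paper expands the left-hand side in $y$-derivatives and kills the resulting first-order term $F_{ij}u_{ijk}\varphi_k^{\ast}$ by differentiating $F(D^2u)=0$ in $x$, whereas you expand the right-hand side in $x$-derivatives and kill $G_{N_{ij}}w_{ijk}\varphi_a$ by differentiating $G(D^2w)=0$ in $y$, and the coefficient identity obtained from the chain rule applied to $G(N)=-F(-KI+N^{-1})$ is the same in both versions. No changes are needed.
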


Equivalently, the right hand side will not have first order terms
$\partial\varphi(x(y))/\partial x^{i}$.

\begin{proof}
We will transform the left hand side of (\ref{invarianceofL}) into its right.
First,
\[
\frac{\partial\varphi}{\partial x^{j}}=\frac{\partial y^{k}}{\partial x^{j}%
}\frac{\partial\varphi^{\ast}}{\partial y^{k}}=(K\delta_{jk}+u_{jk}%
)\varphi_{k}^{\ast}.
\]
Consequently,
\[
F_{ij}\partial_{i}(\partial_{j}\varphi)=F_{ij}u_{ijk}\varphi_{k}^{\ast}%
+F_{ij}(K\delta_{jk}+u_{jk})\varphi_{k\ell}^{\ast}(K\delta_{i\ell}+u_{i\ell
}).
\]
The first term on the right hand side vanishes via the equation:
\[
F_{ij}u_{ijk}\varphi_{k}^{\ast}=\varphi_{k}^{\ast}\frac{\partial}{\partial
x^{k}}F(D^{2}u(x))=0.
\]
So it remains to verify that
\begin{equation}
(K\delta_{i\ell}+u_{i\ell})\frac{\partial F}{\partial M_{ij}}(D^{2}%
u(x))(K\delta_{jk}+u_{jk})=\frac{\partial G}{\partial N_{ij}}(D^{2}w(y)),
\label{F=G}%
\end{equation}
which is a little clearer in the eigenvalue dependent case
\[
F(M)=f(\lambda(M)),G(N)=g(\mu(N))=-f(-K+1/\mu(N)),
\]
since if the Hessian $D^{2}u(p)$ is diagonal at $x=p$ , then $K\delta_{i\ell
}+u_{i\ell}=(K+\lambda_{i})\delta_{i\ell}$, so that at $p$, the putative
equality is
\begin{equation}
\label{gi}(K+\lambda_{i})^{2}f_{i}=g_{i}.
\end{equation}
Since $(\lambda_{i}+K)^{2}f_{i}=(1/\mu_{i})^{2}\partial f/\partial\lambda
_{i}=\partial g/\partial\mu_{i}$, the result follows in this case.

Let us now return to the general situation. Using the chain rule for
$F(M-KI)=-G(M^{-1})$, we get
\begin{align*}
F_{ij}(M-KI)  &  =\frac{\partial}{\partial M_{ij}}(-G(M^{-1}))\\
&  =-\left.  \frac{\partial G}{\partial N_{k\ell}}\right\vert _{M^{-1}}%
\frac{\partial(M^{-1})_{k\ell}}{\partial M_{ij}}\\
&  =\left.  \frac{\partial G}{\partial N_{k\ell}}\right\vert _{M^{-1}}%
(M^{-1})_{ki}(M^{-1})_{\ell j},
\end{align*}
so upon multiplying by $K\delta_{i\ell}+u_{i\ell}=M_{i\ell}$, we obtain
(\ref{F=G}); in turn, the equivariance (\ref{invarianceofL}).
\end{proof}

\noindent\textbf{Remark 2.1. }The disappearance of gradient terms
$D\varphi^{\ast}\left(  y\right)  $ in the right hand side of
(\ref{invarianceofL}) depends on $u$ solving $F(D^{2}u)=0.$ For comparison,
without any equation for function $u\left(  x\right)  ,$ the Laplace-Beltrami
operator%
\[
\Delta_{g(x)}\varphi(x):=\frac{1}{\sqrt{\det g(x)}}\frac{\partial}{\partial
x^{i}}\left(  \sqrt{\det g(x)}\,g^{ij}(x)\frac{\partial}{\partial x^{j}%
}\varphi(x)\right)
\]
corresponding to the induced metric
\[
g(x)=\left.  dx^{2}+dy^{2}\right\vert _{L}=\left(  I+(D^{2}u(x))^{T}%
D^{2}u(x)\right)  dx^{2}%
\]
on the \textquotedblleft gradient\textquotedblright\ graph $L=\left(
x,Du(x)\right)  \in\mathbb{R}^{n}\times\mathbb{R}^{n}$ is invariant under any
rotation in $\mathbb{R}^{2n},$ in particular the Legendre transform,%
\[
\Delta_{g(x)}\varphi(x)=\Delta_{g(y)}\varphi^{\ast}(y).
\]
This is because, by design, the invariant Laplace-Beltrami operator%
\[
\Delta_{g}=g^{ij}\partial_{ij}+g^{ij}\Gamma_{ij}^{k}\partial_{k},
\]
carries over those first order derivative terms.

\bigskip

We now prove the mean value inequality using a transformation argument. We
suppose $Du(0)=0$ and $K$ is $K+1$ in transform \eqref{LL} for simplicity.

\begin{prop}
[Mean value inequality]\label{prop:mvi} Let $u$ be a smooth $K$-convex
solution to \eqref{Esigma2} on $B_{3}(0)$. If $b\in C(B_{3})$ is a viscosity
subsolution of the linearized operator \eqref{lin}, then the following
inequality holds:
\begin{equation}
\begin{aligned} b(0)\le C(n,K)\int_{B_1}b(x)\Delta u(x)\,dx. \end{aligned}
\end{equation}

\end{prop}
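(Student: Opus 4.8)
The plan is to transplant the problem to the Legendre--Lewy picture, where the equation becomes uniformly elliptic, apply the classical local maximum principle there, and then pull the resulting mean value inequality back to the original variables, being careful to track how the two background measures ($dx$ and $\Delta u\,dx$) and the two balls are related under the diffeomorphism $y(x)=Du(x)+(K+1)x$.

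First I would set up the transform. By Subsection~\ref{sec:CoV}, the $K$-convexity of $u$ makes $y(x)=Du(x)+(K+1)x$ a global diffeomorphism of $B_3(0)$ onto its image, with $y(0)=0$, and $w(y)$ (the Legendre transform of $u+\tfrac{K+1}{2}|x|^2$) solves the uniformly elliptic equation $G(D^2w)=0$ from [CY], with ellipticity constants depending only on $n$ and $K$. The key point from Proposition~\ref{prop:trans} is that the linearized operator $\Delta_F$ acting on test functions transforms exactly into $L_G$ acting on the pulled-back test functions, with \emph{no} first-order terms appearing; consequently, if $b\in C(B_3)$ is a viscosity subsolution of $\Delta_F$, then $b^*(y):=b(x(y))$ is a viscosity subsolution of the uniformly elliptic operator $L_G$ on the image domain. (One should note the matching of viscosity test functions under the $C^{1,1}$-diffeomorphism; this is standard since the change of variables is smooth and invertible.) I would also record the distortion bounds: since $Dy/Dx = (K+1)I + D^2u$ and, by the semiconvexity together with the equation, the eigenvalues $\lambda_i+K+1$ of this matrix satisfy $c(n,K)\le \prod(\lambda_i+K+1)$ and each factor is bounded below by a positive constant, while the Jacobian $\det(Dy/Dx)$ can be large only where $\lambda_1$ is large --- precisely where $\Delta u = \sqrt{2+|D^2u|^2}$ is large. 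This is the quantitative link between $dx$ and the image measure.

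Next I would apply the local maximum principle (e.g.\ Gilbarg--Trudinger Theorem~9.20, or Caffarelli--Cabr\'e for the viscosity form) to $b^*$ on a fixed ball $B_\rho(0)$ in the $y$-variables contained in $y(B_2)$, with $\rho=\rho(n,K)$ chosen small enough using the distortion estimates. This gives
\[
b^*(0)\le C(n,K)\left(\int_{B_\rho(0)} (b^*)^+ \,dy\right),
\]
or more precisely $b^*(0)\le C\,\|(b^*)^+\|_{L^p(B_\rho)}$ for some $p>0$, which after Hölder bounds the $L^1$ average; since $b\ge \tfrac14\ln\Lambda>0$ we may drop the positive part. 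Changing variables $y\mapsto x$ in this integral, $dy = \det\!\big((K+1)I+D^2u\big)\,dx$, and here is where the measure $\Delta u\,dx$ enters: one must show $\det\!\big((K+1)I+D^2u\big)\le C(n,K)\,\Delta u = C(n,K)\sqrt{2+|D^2u|^2}$ on the relevant region. This follows from \eqref{IEallbutonebounded}: all eigenvalues but $\lambda_1$ are bounded by $C(n,K)$, so the determinant is at most $C(n,K)(\lambda_1+K+1)\le C(n,K)\,\Delta u$ once $\lambda_1$ is large, and is trivially bounded where $\lambda_1$ is small. Finally I would check that $y(B_2)\supset B_\rho(0)$ for a controlled $\rho(n,K)$ and that $x(B_\rho(0))\subset B_1$ --- or, if the latter inclusion is not automatic, adjust the radii (replacing $B_3$, $B_1$ by comparable concentric balls and rescaling) so that the domain of integration on the $x$-side sits inside $B_1$; this uses only that $Dy/Dx$ has eigenvalues bounded above and below away from zero on the region where $\lambda_1$ is bounded, plus a covering/connectedness argument near the origin.

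The main obstacle I expect is the geometric bookkeeping in this last step: the diffeomorphism $x\mapsto y$ can be badly distorted on the set where $\lambda_1$ is enormous, so one cannot simply assert $B_\rho(0)_y \leftrightarrow B_r(0)_x$ with comparable radii globally. The fix is to localize near the origin, where one has no a priori Hessian bound, but to observe that the local maximum principle only needs a ball on the $y$-side and that pulling back \emph{small} $y$-balls around $0$ gives $x$-sets that, while possibly non-round, are contained in a fixed $x$-ball by a continuity/degree argument (the map is a diffeomorphism, so the preimage of a small connected neighborhood of $0$ is a small connected neighborhood of $0$), and then one enlarges to $B_1$ at the cost of enlarging the constant $C(n,K)$. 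The measure comparison $\det((K+1)I+D^2u)\le C(n,K)\Delta u$ is the other point requiring care, but as indicated it is a direct consequence of the bound \eqref{IEallbutonebounded} on the subdominant eigenvalues.
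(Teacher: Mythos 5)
Your proposal is correct and follows essentially the same route as the paper: transform by Legendre--Lewy, verify that $b^{*}$ is a viscosity subsolution of the uniformly elliptic transformed operator, apply the local maximum principle in the $y$-variables, and pull back using $dy=\det\left(D^{2}u+(K+1)I\right)dx\leq C(n,K)\,\Delta u\,dx$ via the bound (\ref{IEallbutonebounded}) on the subdominant eigenvalues. The geometric bookkeeping you worry about at length is immediate from the uniform monotonicity $|y(x_{I})-y(x_{II})|\geq|x_{I}-x_{II}|$ of $y(x)=Du(x)+(K+1)x$ together with $y(0)=0$, which yields both $y(B_{1})\supset B_{1}$ and $x(B_{1}^{y})\subset B_{1}$ at once, with no covering or degree argument needed.
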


\begin{proof}
Let us first verify that transformed viscosity subsolution $b^{\ast
}(y):=b(x(y))$ is a viscosity subsolution of the transformed linearized
operator \eqref{invarianceofL}. We denote $y(B_{3}):=(K(\cdot)+Du)(B_{3})$ the
dual domain under the coordinate inversion. Suppose that $\psi(y)\in
C^{2}(y(B_{3}))$ touches $b^{\ast}(y)$ from above near $y_{0}\in y(B_{3}).$
Then $\psi_{\ast}(x):=\psi(y(x))$ touches $b(x)$ from above near
$x_{0}=x(y_{0})$, so
\[
F_{ij}\frac{\partial^{2}\psi_{\ast}}{\partial x^{i}\partial x^{j}}(x_{0}%
)\geq0.
\]
We recall $x\mapsto Kx+Du(x)$ is a diffeomorphism: letting $\varphi
(x)=\psi(y(x))\in C^{2}$, it follows from transformation rule
\eqref{invarianceofL} that $\varphi^{\ast}(y)=\psi(y)$ satisfies the desired
inequality at $y_{0}$.

\medskip It was first shown in \cite{CY} that the equation solved by the
vertical coordinate Lagrangian potential $w(y)$,
\[
G(D^{2}w)=-F(D^{2}u)=-\sigma_{2}(-KI+(D^{2}w)^{-1})=-1,
\]
is conformally, uniformly elliptic for $K$-convex solutions $u$, in the sense
that for $H_{ij}:=\sigma_{n}(\lambda(D^{2}w))G_{ij}$, the operator
$H_{ij}\partial_{ij}$ is uniformly elliptic:
\[
c(n,K)I\le(H_{ij})=\sigma_{n}(1/\lambda)(G_{ij})\le C(n,K)I.
\]
This can also be seen from \eqref{IEDfrange} using the change of variables \eqref{gi}.

\medskip Since $u\in C^{\infty}(B_{3})$ is $K$-convex, the gradient map
$y(x)=Du(x)+Kx$ is uniformly monotone, and we have a lower bound
$|y(x_{I})-y(x_{II})|\geq|x_{I}-x_{II}|$ for each $x_{I},x_{II}\in B_{3}$, if,
abusing notation for simplicity, $K$ is $K+1$ in our Legendre-Lewy transform
\eqref{LL}. It follows that the dual domain contains the unit ball:
\[
y(B_{1})=(K(\cdot)+Du)(B_{1}(0))\supset B_{1}(0).
\]
Since $b^{\ast}(y)$ is a subsolution of a uniformly elliptic operator, it
follows from the local maximum principle \cite[p.36]{CC} that $b^{\ast}$
satisfies a mean value inequality:
\[
b^{\ast}(0)\leq C(n,K)\int_{B_{1}^{y}}b^{\ast}(y)dy.
\]
Returning to $x$ variables and using $x(B_{1}^{y})\subset B_{1}$, we obtain
\[
b(0)\leq C(n,K)\int_{B_{1}}b(x)\det(D^{2}u(x)+KI)dx.
\]
Using $\lambda_{i}\leq C(n,K)$ with $i\geq2$ (for the small eigenvalues) from
(\ref{IEallbutonebounded}), as well as $c\left(  n\right)  \leq\lambda
_{max}=\lambda_{1}<\Delta u,$ we get
\[
b(0)\leq C(n,K)\int_{B_{1}}b(x)\lambda_{max}(x)\,dx\leq C(n,K)\int_{B_{1}%
}b(x)\Delta u(x)\,dx,
\]
as required.
\end{proof}

\noindent\textbf{Remark 2.2.} Without going through the Legendre-Lewy
transform, we do not see a direct proof for Proposition \ref{prop:mvi}. In the
original $x$-coordinates, in general, without the K-convexity assumption on
the solution $u\left(  x\right)  ,$ we have a weaker-quadratic-weight mean
value inequality than the one with the linear weight $\bigtriangleup u$ in
Proposition \ref{prop:mvi}. In fact, given any \textit{smooth} positive
subsolution $a\left(  x\right)  $, such as $\bigtriangleup u$, of linearized
operator \eqref{lin}, an easy modification of the local maximum principle
\cite[Theorem 9.20]{GT} yields the weighted mean value inequality
\[
a(0)\leq C(n)\int_{B_{1}}\left(  \frac{\Vert DF\Vert^{n}}{\det DF}\right)
a(x)dx,
\]
where $\Vert DF\Vert$ is the maximum eigenvalue of $(F_{ij})$. By the
eigenvalue bounds (\ref{QEllip}) of $(F_{ij})$, we have
\[
\frac{\Vert DF\Vert^{n}}{\det DF}\leq C(n)\lambda_{1}^{2}<C(n)(\Delta u)^{2},
\]
leading to the (ineffective!) mean value inequality
\[
a(0)\leq C(n)\int_{B_{1}}a(x)(\Delta u)^{2}dx.
\]
Still, there follows an $L^{\infty}$ Hessian bound for the solutions $u\left(
x\right)  $ to (\ref{Esigma2}) in terms of the $L^{3}$ norm of the Hessian
$D^{2}u$, improving a result in \cite{U2}.

\section{Proof of Theorem \ref{thm:Hess}}

\label{sec:Hess}

By scaling $v\left(  x\right)  =u\left(  Rx\right)  /R^{2},$ we assume $R=3$,
and we assume $Du(0)=0$ and $K$ is $K+1$ in \eqref{LL} for simplicity. By
Proposition \ref{prop:Jac}, $b(x)$ is a smooth subsolution of linearized
operator \eqref{lin} when $\lambda_{max}(x)\geq\Lambda(n,K)$ is sufficiently
large. Redefining it as
\[
b\left(  x\right)  =\max\left\{  \frac{1}{4}\ln\lambda_{\max}\left(  x\right)
,\frac{1}{4}\ln\Lambda\left(  n,K\right)  \right\}  ,
\]
we see that $b\left(  x\right)  ,$ as the maximum of two smooth subsolutions,
is a viscosity subsolution of linearized operator \eqref{lin}. By Proposition
\ref{prop:mvi}, we conclude it satisfies the mean value inequality
\[
b(0)\leq C(n,K)\int_{B_{1}}b(x)\Delta u(x)dx.
\]
Our next step is to apply the integral Jacobi inequality. Introducing a cutoff
function $\varphi=1$ on $B_{1}$ and $\varphi=0$ outside $B_{2}$, we integrate
by parts:
\begin{align*}
\int_{B_{1}}b(x)\Delta u(x)\,dx  &  \leq\int_{B_{2}}\varphi^{2}b(x)\Delta
u(x)\,dx\\
&  \leq-\int_{B_{2}}\varphi^{2}Db\cdot Du\ dx-2\int_{B_{2}}(\varphi
b)D\varphi\cdot Du\,dx.
\end{align*}
The second term is easy if we invoke $b\leq C(n,K)\ln\lambda_{max}\leq
C(n,K)\lambda_{max}\leq C(n,K)\Delta u$:
\[
-2\int_{B_{2}}(\varphi b)D\varphi\cdot Du\,dx\leq C(n,K)\Vert Du\Vert
_{L^{\infty}(B_{2})}\int_{B_{2}}\Delta u\,dx\leq C(n,K)\Vert Du\Vert
_{L^{\infty}(B_{2})}^{2}.
\]
For the first term, we start with
\[
-\int_{B_{2}}\varphi^{2}Db\cdot Du\,dx\leq C(n)\Vert Du\Vert_{L^{\infty}%
(B_{2})}\int_{B_{2}}|Db|\,dx.
\]
Next, the idea is to bound $\left\vert Db\right\vert $ by $F_{ij}b_{i}b_{j}.$
Assume at a point $p\in B_{2}$ that $D^{2}u(p)$ is diagonal, with
$u_{ii}=\lambda_{i}$ and $\lambda_{1}\geq\cdots\geq\lambda_{n}$. Write
$|Db(p)|\leq C(n)\sum_{i=1}^{n}|b_{i}(p)|$. For $i=1$:
\[
|b_{1}|\leq f_{1}b_{1}^{2}+1/f_{1}\leq f_{1}b_{1}^{2}+C(n,K)\lambda_{max},
\]
since $f_{1}\geq c(n,K)/\lambda_{1}$ from (\ref{IEDfrange}). For \textit{each
fixed} $i\geq2$:
\[
|b_{i}|\leq f_{i}b_{i}^{2}+1/f_{i}\leq f_{i}b_{i}^{2}+C(n,K),
\]
since $f_{i}\geq c(n,K)\lambda_{1}\geq c(n,K)>0$ from (\ref{IEDfrange}). We
conclude that in $B_{2}$,
\[
|Db|\leq C(n)F_{ij}b_{i}b_{j}+C(n,K)\Delta u,
\]
where we used $\Delta u\geq\sqrt{2n/\left(  n-1\right)  }.$ Therefore, we see
there is one term left to estimate:
\[
\int_{B_{2}}|Db|\,dx\leq C(n)\int_{B_{2}}F_{ij}b_{i}b_{j}\,\mathrm{d}%
x+C(n,K)\Vert Du\Vert_{L^{\infty}(B_{2})}.
\]
Let $\Phi$ be another cutoff, defined by $\Phi(x)=1$ on $B_{2}$, and $\Phi=0$
outside $B_{3}$. Applying the integral Jacobi inequality (\ref{IJac}), we can
write
\begin{align*}
\int_{B_{3}}\Phi^{2}F_{ij}b_{i}b_{j}\,dx  &  \leq-2\int_{B_{3}}F_{ij}\Phi
_{i}(\Phi b_{j})\,dx\\
&  \leq\frac{1}{2}\int_{B_{3}}\Phi^{2}F_{ij}b_{i}b_{j}\,dx+2\int_{B_{3}}%
F_{ij}\Phi_{i}\Phi_{j}\,dx.
\end{align*}
Thus, it remains to estimate the final term. Assume again that $D^{2}u(p)$ is
diagonal at $x=p$. Then at $p$, it is easy to estimate the integrand:
\[
F_{ij}\Phi_{i}\Phi_{j}=f_{i}\Phi_{i}^{2}\leq C(n,K)\sum_{i=1}^{n}%
f_{i}=C(n,K)(n-1)\Delta u.
\]
We conclude the final integral has the desired bound:
\[
\int_{B_{3}}F_{ij}\Phi_{i}\Phi_{j}\,dx\leq C(n,K)\Vert Du\Vert_{L^{\infty
}(B_{3})}.
\]
Putting all the pieces together, we conclude
\[
b(0)\leq C(n,K)\Vert Du\Vert_{L^{\infty}(B_{3})}^{2}.
\]
which completes the proof of Theorem \ref{thm:Hess}.

\end{document}